\def\authorinformation{
	Institute of Mathematics, Würzburg University, Emil-Fischer-Str. 40, 97074 Würzburg \\E-mail: \text{\href{mailto:katja.moenius@mathematik.uni-wuerzburg.de}{katja.moenius@mathematik.uni-wuerzburg.de}}
}
\date{\today}
\title{\thetitle}
\renewcommand*\@maketitle{%
	\normalfont\normalsize
	\@adminfootnotes
	\@mkboth{\@nx\shortauthors}{\@nx\shorttitle}%
	\ifx\@empty\@date \else {\vskip 1em \vtop{\flushright\small\@date\@@par}} \vspace{5mm}\fi
	\global\topskip42\p@\relax 
	\Large\@settitle
	\ifx\@empty\authors \else \@setauthors \fi
	\ifx\@empty\authorinformation
	\else
	\baselineskip9\p@
	\vtop{\centering{\footnotesize\textit\authorinformation\@@par}%
		\global\dimen@i\prevdepth}\prevdepth\dimen@i
	\fi
	\ifx\@empty\@dedicatory
	\else
	\baselineskip18\p@
	\vtop{\centering{\footnotesize\itshape\@dedicatory\@@par}%
		\global\dimen@i\prevdepth}\prevdepth\dimen@i
	\fi
	\@setabstract
	\normalsize
	\if@titlepage
	\newpage
	\else
	\dimen@34\p@ \advance\dimen@-\baselineskip
	\vskip\dimen@\relax
	\fi
} 
\renewcommand*\@adminfootnotes{%
	\let\@makefnmark\relax  \let\@thefnmark\relax
	\ifx\@empty\@subjclass\else \@footnotetext{\@setsubjclass}\fi
	\ifx\@empty\@keywords\else \@footnotetext{\@setkeywords}\fi
	\ifx\@empty\thankses\else \@footnotetext{%
		\def\par{\let\par\@par}\@setthanks}%
	\fi
}
\newcommand{\setupPaper}[2]{
	\def\thetitle{#1}
	\def\thecreationdate{#2}
}
\def\spec{\operatorname{spec}}
\def\rank{\operatorname{rank}}
\def\zg{\operatorname{Z}}
\def\zgl{\operatorname{Z_L}}
\def\ug{\operatorname{U}}
\def\eg{\operatorname{E\Gamma}}
\def\ann{\operatorname{ann}}
\author{Katja Mönius}
\begin{document}
	
\begin{abstract}
	We investigate eigenvalues of the zero-divisor graph $\Gamma(R)$ of  finite commutative rings $R$ and study the interplay between these eigenvalues, the ring-theoretic properties of $R$ and the graph-theoretic properties of $\Gamma(R)$. The graph $\Gamma(R)$ is defined as the graph with vertex set consisting of all non-zero zero-divisors of $R$ and adjacent vertices $x,y$ whenever $xy = 0$. We provide formulas for the nullity of $\Gamma(R)$, i.e. the multiplicity of the eigenvalue 0 of $\Gamma(R)$. Moreover, we precisely determine the spectra of $\Gamma(\mathbb Z_p \times \mathbb Z_p \times \mathbb Z_p)$ and $\Gamma(\mathbb Z_p \times \mathbb Z_p \times \mathbb Z_p \times \mathbb Z_p)$ for a prime number $p$. We introduce a graph product $\times_{\Gamma}$ with the property that $\Gamma(R) \cong \Gamma(R_1) \times_{\Gamma} \ldots \times_{\Gamma} \Gamma(R_r)$ whenever $R \cong R_1 \times \ldots \times R_r.$
	With this product, we find relations between the number of vertices of the zero-divisor graph $\Gamma(R)$, the compressed zero-divisor graph, the structure of the ring $R$ and the eigenvalues of $\Gamma(R)$.
\end{abstract}
\maketitle

\section{Introduction}
Let $R$ be a finite commutative ring with $1 \neq 0$ and let $Z(R)$ denote its set of zero-divisors. As introduced by Anderson~and~Livingston~\cite{Anderson1999} in 1999, the \textit{zero-divisor graph} $\Gamma(R)$ is defined as the graph with vertex set $Z^*(R) = Z(R) \backslash \{0\}$ where two vertices $x,y$ are adjacent if and only if $xy = 0$. The aim of considering these graphs is to study the interplay between graph theoretic properties of $\Gamma(R)$ and the ring properties of $R$. In order to simplify the representation of $\Gamma(R)$ it is often useful to consider the so-called \textit{compressed zero-divisor graph} $\Gamma_E(R)$. This graph was first introduced by Mulay~\cite{Mulay2007} and further studied in \cite{Spiroff2011, Weber2011, Anderson2016, Pirzada2018}. For an element $r \in R$ let $[r]_R = \{s \in R \mid \ann_R(r) = \ann_R(s) \}$ and $R_E = \{[r]_R \mid r \in R\}$.  Then $\Gamma_E(R)$ is defined as the graph $\Gamma(R_E)$. Note that $[0]_R = \{0\}, [1]_R = R \backslash Z(R)$ and $[r]_R \subseteq Z(R) \backslash \{0\}$ for every $r \in R \backslash ([0]_r \cup [1]_R)$. The notations are adopted from Spiroff~and~Wickham~\cite{Spiroff2011}.

The \textit{spectrum} of a graph $G$ is defined as the multi-set of eigenvalues, i.e. the roots of the characteristic polynomial of the adjacency matrix $A(G)$. The aim of studying eigenvalues of graphs is to find relations between those values and structural properties of the graph. The author refers to \cite{Brouwer2012} for a good introduction to spectral graph theory. The \textit{nullity} $\eta(G)$ of a graph $G$ is defined as the multiplicity of the eigenvalue 0 of $G$. Obviously, we have that $\eta(G) = \dim A(G) - \rank A(G)$. Background and further results on the nullity of graphs are summarized in \cite{Gutman2011}.
Within spectral graph theory, most graphs are considered to be \textit{simple}, i.e. to be undirected finite graphs without loops or multiple edges. By definition, $\Gamma(R)$ has no multiple edges, and we can easily see  that $\Gamma(R)$ is undirected if and only if $R$ is commutative. Moreover, as already proven by Anderson~and~Livingston~\cite[Theorem 2.2]{Anderson1999}, the graph $\Gamma(R)$ is finite if and only if $R$ is finite or an integral domain. In the latter case, though, $R$ has no zero-divisors at all and is just the empty graph. Hence, all our rings are assumed to be finite and commutative. However, in contrast to the original definition of Anderson~and~Livingston~\cite{Anderson1999}, we do not want to eliminate potential loops of our zero-divisor graphs since these loops provide important information about the structure of the ring $R$.

 In order to determine the eigenvalues of a graph, it often can be useful to consider graph products. For example, in \cite{Akhtar2009} the spectra of unitary Cayley graphs of finite rings could easily be determined by observing that these graphs are isomorphic to direct products of unitary Cayley graphs of finite local rings. The \textit{direct product} $G_1 \times  G_2$ of graphs $G_1$ and $G_2$ is defined as the graph with vertex set $V(G_1) \times V(G_2)$ where two vertices $(v_1,v_2), (v_1',v_2') \in V(G_1) \times V(G_2)$ are adjacent in $G_1 \times G_2$ if and only if $v_1$ is adjacent to $v_1'$ in $G_1$ and $v_2$ is adjacent to $v_2'$ in  $G_2$.  It is well-known that the adjacency matrix of the direct product $G_1 \times G_2$ equals the Kronecker product $A(G_1) \otimes A(G_2)$. Therefore, if $\lambda_i$ resp. $\mu_i$ are the eigenvalues of $G_1$ resp. $G_2$, the eigenvalues of $G_1 \times G_2$ are exactly the products $\lambda_i \mu_j$. Moreover, the \textit{complete product} $G_1 \nabla G_2$ is defined to have vertex set $V(G_1) \cup V(G_2)$ and $v_1$ and $v_2$ are adjacent in $G_1 \nabla G_2$ if and only if either $v_1 \in V_1$ and $v_2 \in V_2$ or $v_1$ is adjacent to $v_2$ in $G_1$ resp. $G_2$. For $v_1 \in V(G_1)$, $v_2 \in V(G_2)$ the \textit{point identification} $G_1 \bullet G_2$ arises from setting $v_1 = v_2$. If $v \in V(G_1)$ and $v \in V(G_2)$, we write $G_1 \overset{v}{\bullet} G_2$ in order to make clear that the graphs were coalesced at $v$.

\smallskip
In this paper, we study the interplay between graph-theoretic properties of the zero-divisor graph $\Gamma(R)$, the spectrum of $\Gamma(R)$ and the ring properties of $R$. By now, surprisingly little is known about the eigenvalues and adjacency matrices of zero-divisor graphs. First research in this direction was done by Sharma~et.~al.~\cite{Sharma2011} in 2011. They made some observations on the adjacency matrices and eigenvalues of the graphs $\Gamma(\mathbb Z_p \times \mathbb Z_p)$ and $\Gamma(\mathbb Z_p[i] \times \mathbb Z_p[i])$. Further results were found by Young~\cite{Young2015} in 2015 and independently by Surendranath~Reddy~et.~al.~\cite{Reddy2017} in 2017. Both studied the graphs $\Gamma(\mathbb Z_n)$ and precisely determined the eigenvalues of $\Gamma(\mathbb Z_p), \Gamma(\mathbb Z_{p^2}),\Gamma(\mathbb Z_{p^3})$ and $\Gamma(\mathbb Z_{p^2q})$ for $p$ and $q$ being prime numbers. Other recent papers on that topic are \cite{Tirop2019, Omondi2018}. Note that in most of these papers the corresponding zero-divisor graphs were also considered with loops.

Our main approach is the following: since $R$ is a finite ring, it can be written as $R \cong R_1 \times \ldots \times R_r$, where each $R_i$ is a finite local ring. A proof for this and further results within the theory of finite commutative rings can be found in~\cite{Bini2002}. In Section~\ref{sec:products} we introduce a graph product $x_{\Gamma}$ with the property that $$\Gamma(R) \cong \Gamma(R_1) \times_{\Gamma} \ldots \times_{\Gamma} \Gamma(R_r)$$ whenever $R \cong R_1 \times \ldots \times R_r.$ With this graph product, in Section~\ref{sec:general} we find a relation between the number of vertices of $\Gamma_E(R)$ and the property of $R$ to be a local ring. Moreover, we derive formulas for the number of vertices of the zero-divisor graph $\Gamma(R)$ resp. the compressed zero-divisor graph $\Gamma_E(R)$ in terms of the local rings $R_i$. From these formulas, we can deduce a lower bound for the nullity of $\Gamma(R)$.  In Section~\ref{sec:integers} we restrict our considerations to rings which are isomorphic to direct products of rings of integers modulo $n$, i.e. $R \cong  \mathbb Z_{{p_1}^{t_1}} \times \ldots \times \mathbb Z_{{p_r}^{t_r}}$ for (not necessarily distinct) prime numbers $p_i$ and positive integers $r,t_i$. For these rings, we find the exact nullity of $\Gamma(R)$ and present an easy approach to determine also the non-zero eigenvalues of $\Gamma(R)$. For example, we precisely determine the spectra of $\Gamma(\mathbb Z_p \times \mathbb Z_p \times \mathbb Z_p)$ and $\Gamma(\mathbb Z_p \times \mathbb Z_p \times \mathbb Z_p \times \mathbb Z_p)$ in terms of a prime number $p$. We also provide the characteristic polynomials of $\Gamma(\mathbb Z_{p^2} \times \mathbb Z_p)$ and $\Gamma(\mathbb Z_p \times \mathbb Z_p \times \mathbb Z_q)$ for primes $q \neq p$. This generalizes the results of Sharma~et.~al.~\cite{Sharma2011}, Young~\cite{Young2015} and Surendranath~Reddy~et.~al.~\cite{Reddy2017}.

Throughout this paper, we denote edges as sets of two vertices. For a graph $G$, we write $A(G)$ for the adjacency matrix of $G$ , $V(G)$ for the set of vertices of $G$ and $\chi_G(x) = \det(xI - A(G))$ for the characteristic polynomial of $G$. If $\lambda$ is an eigenvalue of $G$ of multiplicity $x$, then we denote this by $\lambda^{[x]}$. The number of elements in a set $S$ is denoted by $\#S$, and $\varphi$ denotes Euler's totient function. For the set of units of a ring $R$, we write $U(R)$.

\section{Products of zero-divisor graphs} \label{sec:products}
Let $R \cong R_1 \times \ldots \times R_r$ be a ring, where each $R_i$ is a finite local ring. Note that in this case $\#R_i = p_i^{t_i}$ for some prime numbers $p_i$ and $t_i \in \mathbb N$. Our aim is to define a graph product $\times_{\Gamma}$ such that $$\Gamma(R) \cong \Gamma(R_1) \times_{\Gamma} \ldots \times_{\Gamma} \Gamma(R_r)$$ whenever $$R \cong R_1 \times \ldots \times R_r.$$ 
Since two vertices $(v_1,\ldots,v_r),(v_1',\ldots,v_r') \in R_1 \times \ldots \times R_r$ are adjacent in $\Gamma(R_1 \times \ldots \times R_r)$ if and only if $v_i,v_i' \in Z^*(R_i)$ or either $v_i = 0$ or $v_i'=0$, our idea is the following:  we first add the vertex $0 \in R_i$ and the units of $R_i$ to the vertices of each zero-divisor graph $\Gamma(R_i)$, as well as edges from $0$ to every other vertex. Then, we take the direct product of these somehow \textit{extended zero-divisor graphs}, each of which we will denote by $\eg(R_i)$, which yields the extended zero-divisor graph $\eg(R)$. Finally, by removing the vertex $0 \in R$ with all its edges, as well as all units of $R$, we end up with the zero-divisor graph $\Gamma(R)$.

To formalize this, we define the \textit{unit graph} $\ug(R_i)$ of $R_i$ as the graph with vertex set $U(R_i)$ and empty edge set. Moreover, let $\zg(R_i)$ resp. $\zgl(R_i)$ be the \textit{zero graph} with vertex set $\{0\}$ (where $0 \in R_i$) and empty edge set resp. edge set $\{ \{0,0 \} \}$ (i.e. both graphs consist of one vertex only, and, in contrast to $\zg(R)$, the graph $\zgl(R)$ also has a loop at that vertex; we need this distinction for our result in Section~\ref{sec:relation}). Now, the extended zero-divisor graph $\eg (R_i)$ is given by $$\eg(R_i) = \big (\Gamma(R_i) \nabla \zg(R_i) \big) \overset{\{0\}}{\bullet} \big(\ug(R_i) \nabla \zgl(R_i) \big),$$  and we have that \begin{equation*}
	\begin{split}
	\Big(\Gamma(R) \cup \big( \ug(R_1) \times \ldots \times \ug(R_r) \big) \Big) \nabla \zg(R) \cong 
	 \eg(R_1) \times \ldots \times \eg(R_r).
	\end{split}
\end{equation*}
 Hence, we define the associative product $\times_{\Gamma}$ by
 \begin{equation*}
 	\begin{split}
 	\Gamma(R_1) &\times_{\Gamma} \Gamma(R_2) := \\
 	& \Big( \eg(R_1) \times \eg(R_2)\Big) \backslash \Big(V \big(\zg(R_1 \times R_2) \big) \cup V \big(\ug(R_1 \times R_2) \big) \Big),
 	\end{split}
 \end{equation*} where $G\backslash\{v\}$ denotes the graph $G$ without the vertex $v \in V(G)$ and all its adjacent edges.
Note that $\zg(R_1 \times R_2) \cong \zg(R_1) \times \zg(R_2)$ and $\ug(R_1 \times R_2) \cong \ug(R_1) \times \ug(R_2)$.
The product $\times_{\Gamma}$ is illustrated in the following example:

\begin{example}
	Let $R = \mathbb Z_8 \times \mathbb Z_4$. Figure~\ref{fig:1} shows the zero-divisor graphs $\Gamma(\mathbb Z_8)$ and $\Gamma(\mathbb Z_4)$ and Figure~\ref{fig:2} the extended zero-divisor graphs $\eg(\mathbb Z_8)$ and $\eg(\mathbb Z_4)$. In Figure~\ref{fig:3} we see the direct product $\eg(\mathbb Z_8) \times \eg(\mathbb Z_4) \cong \eg(\mathbb Z_8 \times \mathbb Z_4)$ and Figure~\ref{fig:4} finally illustrates the graph product $\Gamma(\mathbb Z_8) \times_{\Gamma} \Gamma(\mathbb Z_4) \cong \Gamma(\mathbb Z_8 \times \mathbb Z_4)$ arising from removing the vertices $(0,0)$ and $V(\ug(\mathbb Z_8 \times \mathbb Z_4))$ from the graph $\eg(\mathbb Z_8) \times \eg(\mathbb Z_4)$.
\end{example}

\begin{figure}[H]
	\subfigure[$\Gamma(\mathbb Z_8)$]{\includegraphics[width=0.60\textwidth]{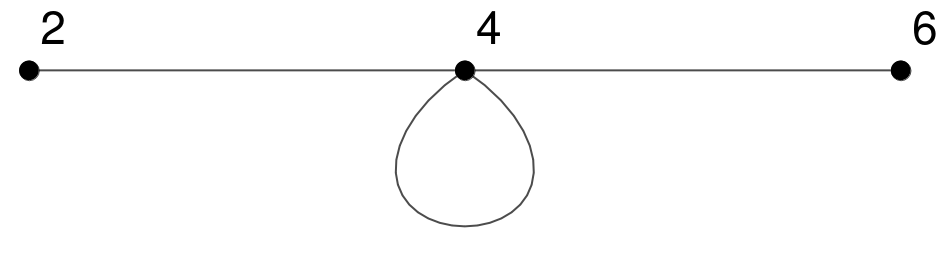}}
	\subfigure[$\Gamma(\mathbb Z_4)$]{\includegraphics[width=0.20\textwidth]{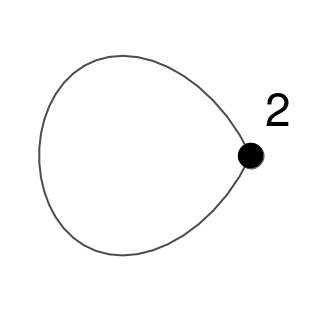}}
	\caption{Zero-divisor graphs $\Gamma(\mathbb Z_8)$ and $\Gamma(\mathbb Z_4)$.}
	\label{fig:1}
\end{figure}

\begin{figure}[H]
	\subfigure[$\eg(\mathbb Z_8)$]{\includegraphics[width=0.45\textwidth]{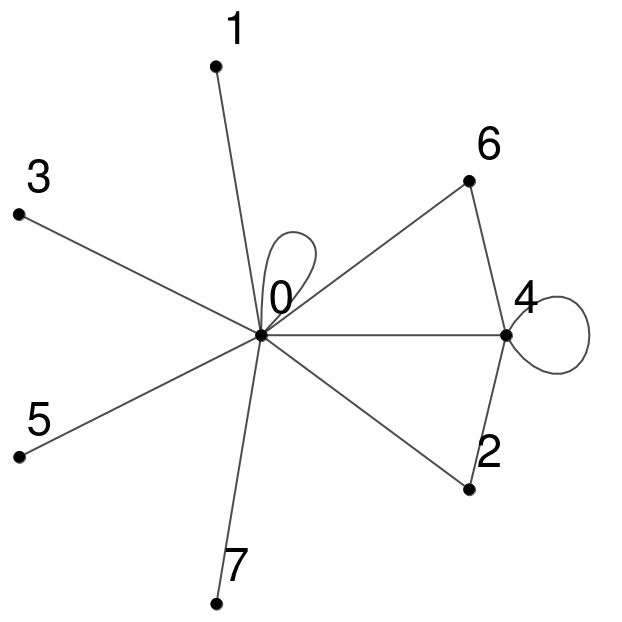}}
	\subfigure[$\eg(\mathbb Z_4)$]{\includegraphics[width=0.45\textwidth]{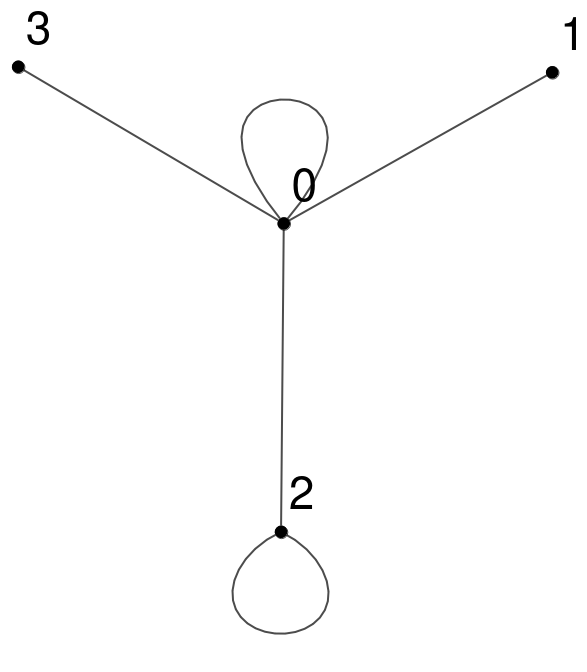}}
	\caption{Extended zero-divisor graphs $\eg(\mathbb Z_8)$ and $\eg(\mathbb Z_4)$.}
	\label{fig:2}
\end{figure}

\begin{figure}[H]
	\includegraphics[width=0.9\textwidth]{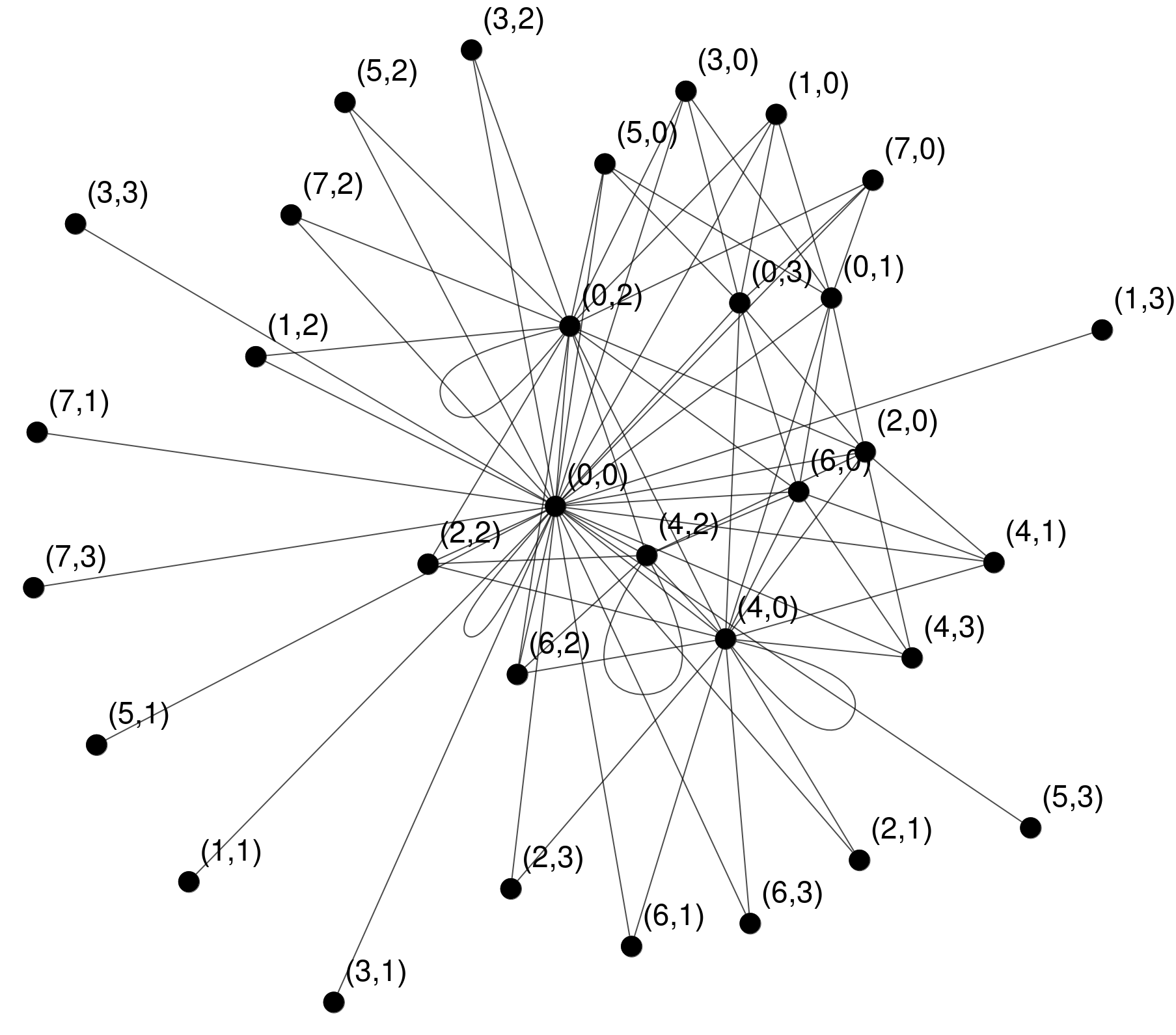}
	\caption{Direct product $\eg(\mathbb Z_8) \times \eg(\mathbb Z_4) \cong \eg(\mathbb Z_8 \times \mathbb Z_4)$.}
	\label{fig:3}
\end{figure}

\begin{figure}[H]
	\includegraphics[width=0.9\textwidth]{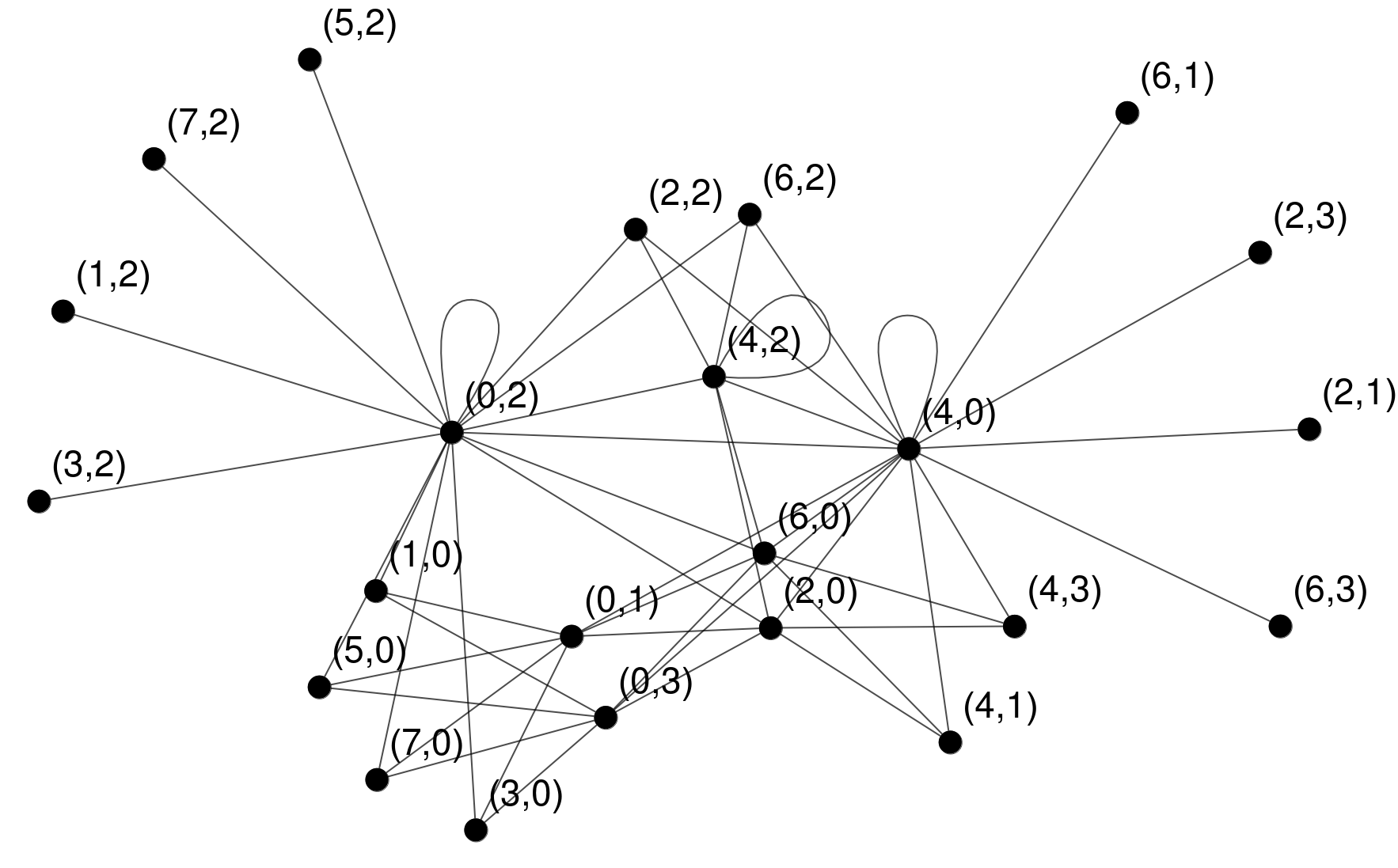}
	\caption{Zero-divisor graph $\Gamma(\mathbb Z_8 \times \mathbb Z_4) \cong \Gamma(\mathbb Z_8) \times_{\Gamma} \Gamma(\mathbb Z_4)$.}
	\label{fig:4}
\end{figure}

The same also holds for the compressed zero-divisor graph, i.e. we have that $\Gamma_E(R) \cong \Gamma_E(R_1) \times_{\Gamma} \ldots \times_{\Gamma} \Gamma_E(R_r)$ whenever $R_E \cong {R_1}_E \times \ldots \times {R_r}_E$.
In Section~\ref{sec:relation} we deduce a relation between the characteristic polynomial of $\Gamma(R)$ and the one of the extended zero-divisor graph $\eg(R)$.

\section{Nullity of zero-divisor graphs of finite commutative rings} \label{sec:general}
The following theorem follows directly from the construction of the product~$\times_{\Gamma}$:
\begin{theorem} \label{thm:verticesZDG}
	Let $R \cong R_1 \times \ldots \times R_r$ with local rings $R_i$. Then
	the number of zero-divisors of $R$, i.e. the number of vertices of the zero-divisor graph $\Gamma(R)$ equals 
	\begin{equation*}
		\begin{split}
			\# V(\Gamma(R)) &= \prod_{i=1}^r \# R_i - \prod_{i=1}^r \#V(\ug(R_i)) -1 \\
			&=  \prod_{i=1}^r \# R_i - \prod_{i=1}^r \#U(R_i) -1.
		\end{split}
	\end{equation*}
\end{theorem}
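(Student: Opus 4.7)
The plan is to carry out a direct count of vertices, exploiting the product decomposition $\Gamma(R) \cong \Gamma(R_1) \times_{\Gamma} \cdots \times_{\Gamma} \Gamma(R_r)$ established in Section~\ref{sec:products}. By construction, the extended zero-divisor graph $\eg(R_i)$ coalesces $\Gamma(R_i)$ (whose vertices are the non-zero zero-divisors of $R_i$), $\zg(R_i)$ and $\zgl(R_i)$ (contributing the single element $0 \in R_i$), and $\ug(R_i)$ (whose vertices are the units of $R_i$). Since in the finite commutative ring $R_i$ every element is either zero, a unit, or a non-zero zero-divisor, these three sets partition $R_i$ and therefore $\#V(\eg(R_i)) = \#R_i$.

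Passing to the direct product of graphs, the vertex set of $\eg(R_1) \times \cdots \times \eg(R_r)$ is the Cartesian product of the individual vertex sets and thus has cardinality $\prod_{i=1}^r \#R_i$. By definition, $\Gamma(R_1) \times_\Gamma \cdots \times_\Gamma \Gamma(R_r)$ is obtained from this direct product by removing the vertices of $\zg(R_1 \times \cdots \times R_r)$ and of $\ug(R_1 \times \cdots \times R_r)$. The former contributes exactly one vertex, namely $(0,\ldots,0)$. For the latter, the identity $\ug(R_1 \times \cdots \times R_r) \cong \ug(R_1) \times \cdots \times \ug(R_r)$ recorded in Section~\ref{sec:products} (which ultimately reflects the fact that a tuple is a unit iff each of its components is) implies that the number of removed unit-vertices equals $\prod_{i=1}^r \#V(\ug(R_i))$.

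Subtracting, the number of remaining vertices, which by the product isomorphism equals $\#V(\Gamma(R))$, is precisely $\prod_{i=1}^r \#R_i - \prod_{i=1}^r \#V(\ug(R_i)) - 1$. The second equality in the statement follows immediately from $\#V(\ug(R_i)) = \#U(R_i)$, which is the definition of the unit graph.

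There is essentially no obstacle here: the theorem is a direct bookkeeping consequence of the construction of $\times_\Gamma$ together with the partition of a finite commutative ring into $\{0\}$, units, and non-zero zero-divisors. If one prefers to avoid the product construction altogether, an equivalent derivation proceeds ring-theoretically: finiteness forces $\#V(\Gamma(R)) = \#R - \#U(R) - 1$, and one then substitutes $\#R = \prod_i \#R_i$ and $\#U(R) = \prod_i \#U(R_i)$. The only care needed is to state clearly that the vertex sets involved in $\eg$ are disjoint after coalescing at $0$, so that cardinalities add without overcounting.
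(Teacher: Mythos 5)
Your proof is correct and follows essentially the same route as the paper: count $\#V(\eg(R_i)) = \#R_i$ via the partition of $R_i$ into $\{0\}$, units, and non-zero zero-divisors, multiply over the direct product, and subtract the zero vertex and the $\prod_i \#U(R_i)$ unit vertices. The additional remarks (the direct ring-theoretic derivation and the note on disjointness) are fine but not needed.
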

\begin{proof}
	We have that $\#V(\eg(R_i)) = \#Z^*(R_i) + \#U(R_i) + 1 = \#R_i$ since $R_i = Z^*(R_i) \cup U(R_i) \cup \{0\}$. Taking into account that $\eg(R) \cong \eg(R_1) \times \ldots \times \eg(R_r)$, we therefore get that $\#V(\eg(R)) = \prod_{i=1}^r \#R_i$. Finally, since $\Gamma(R)$ arises from $\eg(R)$ by removing the vertex $0 \in R$ and all units of $R$ (where each unit of $R$ is a direct product of units of the $R_i$'s), the statement follows.
\end{proof}

Moreover, we get a similar result for the number of vertices of the compressed zero-divisor graph:

\begin{theorem} \label{thm:verticesCZDG}
	Let $R \cong R_1 \times \ldots \times R_r$ with local rings $R_i$. Then
	the number of vertices of the compressed zero-divisor graph $\Gamma_E(R)$ equals 
	\begin{equation*}
		\begin{split}
			\# V(\Gamma_E(R)) &= \prod_{i=1}^r \big(\# V(\Gamma_E({R_i})) + 2 \big)  -2 \\
			&= \prod_{i=1}^r \#{R_i}_E -2.
		\end{split}
	\end{equation*}
\end{theorem}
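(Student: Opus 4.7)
The plan is to mirror the argument of Theorem~\ref{thm:verticesZDG} in the compressed setting, exploiting the decomposition $\Gamma_E(R) \cong \Gamma_E(R_1) \times_{\Gamma} \ldots \times_{\Gamma} \Gamma_E(R_r)$ stated at the end of Section~\ref{sec:products}.

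First I would establish the equivalence of the two displayed expressions by showing $\#{R_i}_E = \#V(\Gamma_E(R_i)) + 2$ for each local ring $R_i$. This follows immediately from the remarks in the introduction: the classes $[0]_{R_i} = \{0\}$ and $[1]_{R_i} = R_i \setminus Z(R_i)$ lie outside $V(\Gamma_E(R_i))$, while every other class $[r]_{R_i}$ is contained in $Z^*(R_i)$ and hence contributes a vertex of $\Gamma_E(R_i)$. So ${R_i}_E$ partitions into the zero class, the unit class, and the vertex set of $\Gamma_E(R_i)$.

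Next I would introduce the extended compressed zero-divisor graph in analogy with $\eg(R_i)$, by adjoining the classes $[0]_{R_i}$ and $[1]_{R_i}$ to $\Gamma_E(R_i)$ together with the appropriate edges. By construction this extended graph has exactly $\#{R_i}_E$ vertices. The same product reasoning as in the proof of Theorem~\ref{thm:verticesZDG}, now applied to the decomposition $R_E \cong {R_1}_E \times \ldots \times {R_r}_E$, shows that the extended compressed graph of $R$ is isomorphic to the direct product of the extended compressed graphs of the $R_i$, hence has $\prod_{i=1}^r \#{R_i}_E$ vertices.

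Finally, to pass from the extended compressed graph to $\Gamma_E(R)$ one removes the vertex $[0]_R$ and every "unit class" of $R_E$. This is where the compressed case simplifies drastically compared with Theorem~\ref{thm:verticesZDG}: since $r \in R$ is a unit if and only if every coordinate $r_i$ is a unit in $R_i$, all units of $R$ collapse in $R_E$ to the single class $[1]_R$ corresponding to the tuple $([1]_{R_1}, \ldots, [1]_{R_r})$. Thus exactly two vertices are deleted, giving the desired count $\prod_{i=1}^r \#{R_i}_E - 2$. The main point that needs careful checking is precisely this last claim — that no additional "pseudo-unit" class arises from a tuple of non-unit classes — but it is forced by the fact that in a local ring every non-unit is a zero-divisor, so any coordinate different from $[1]_{R_i}$ kills some nonzero element and destroys the unit property in the product.
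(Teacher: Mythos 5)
Your proposal is correct and follows essentially the same route as the paper: identify $\#{R_i}_E = \#V(\Gamma_E(R_i)) + 2$ via the classes $[0]_{R_i}$ and $[1]_{R_i}$, then invoke the product construction $\times_{\Gamma}$ applied to $R_E \cong {R_1}_E \times \ldots \times {R_r}_E$ and subtract the two classes $[0]_R$ and $[1]_R$. Your explicit justification that all units of $R$ collapse to the single class $[1]_R$ (so only two vertices are deleted, unlike in Theorem~\ref{thm:verticesZDG}) is a detail the paper leaves implicit in the phrase ``follows from the construction of $\times_{\Gamma}$,'' and it is the right thing to check.
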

\begin{proof}
	Since the $R_i$'s are finite rings, each element of $R_i$ is either a zero-divisor or a unit. Thus, the elements of ${R_i}_E$ are exactly the vertices of $\Gamma_E(R_i)$ together with $[0]_{R_i}$ and $[1]_{R_i}$ (since the elements of $[1]_{R_i}$ are exactly the units of $R_i$). The statement follows from the construction of $\times_{\Gamma}$.
\end{proof}

From this theorem, we can immediately deduce the following nice (algebraic) corollary:
\begin{corollary} \label{cor:local}
	If $\#V(\Gamma_E(R)) + 2$ is a prime number, then $R$ is a local ring. Conversely, if $R$ is a local ring, then $\#V(\Gamma_E(R)) + 2$ is a prime power.
\end{corollary}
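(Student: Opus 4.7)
The plan is to read off both implications from Theorem~\ref{thm:verticesCZDG}, which gives
\[
\#V(\Gamma_E(R)) + 2 \;=\; \prod_{i=1}^{r} \#{R_i}_E
\]
for the decomposition $R \cong R_1 \times \cdots \times R_r$ into local factors. The only non-obvious input is that every ring $R_i$ contains at least the two distinct equivalence classes $[0]_{R_i}$ and $[1]_{R_i}$, so each factor $\#{R_i}_E$ is at least $2$.

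For the first implication the argument is a one-line factorization: if the product on the right is prime and every factor is at least $2$, then the product must consist of a single factor, forcing $r = 1$, so $R \cong R_1$ is itself local. For the converse, if $R$ is local then $r = 1$ and the identity collapses to $\#V(\Gamma_E(R)) + 2 = \#R_E$, so what must be shown is that $\#R_E$ is a prime power. Since $R$ is a finite local ring, $\#R = p^t$ for some prime $p$, and the natural plan is to exploit the multiplicative action of the $p$-group $1 + \mathfrak m$ on $R$: multiplication by a unit preserves annihilators, so each annihilator class is a union of $(1 + \mathfrak m)$-orbits, and every orbit has size dividing $|1 + \mathfrak m| = |\mathfrak m|$, which is a power of $p$.

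The main obstacle will be precisely this converse direction. Knowing only that classes are unions of $p$-power-sized orbits is not enough to force $\#R_E$ itself to be a prime power, because several orbits can coalesce into a single class. A clean proof will probably proceed by induction on the length $n$ of $R$ (the least $n$ with $\mathfrak m^n = 0$) using the filtration $R \supset \mathfrak m \supset \mathfrak m^2 \supset \cdots \supset \mathfrak m^n = 0$, tracking how the annihilator classes inside each quotient $R/\mathfrak m^k$ lift to classes of $R/\mathfrak m^{k+1}$ and thereby forcing the total count to be a power of $p$.
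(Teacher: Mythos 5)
Your treatment of the first implication is exactly the paper's (implicit) argument: Theorem~\ref{thm:verticesCZDG} gives $\#V(\Gamma_E(R))+2=\prod_{i=1}^r\#{R_i}_E$, each factor is at least $2$ because $[0]_{R_i}\neq[1]_{R_i}$, and primality of the product forces $r=1$. Nothing to add there.

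The second implication is where your proposal stops being a proof and becomes a plan, and you were right to flag it as the main obstacle: the plan cannot be completed, because the statement you are trying to prove is false as written. Take $R=\mathbb Z_{p^5}$ (e.g.\ $\mathbb Z_{32}$), a finite local ring. Its annihilator classes are $[0],[1],[p],[p^2],[p^3],[p^4]$, so $\#V(\Gamma_E(R))=4$ and $\#V(\Gamma_E(R))+2=6$, which is not a prime power. (This is consistent with the paper's own later count $V(\Gamma_E(\mathbb Z_{p^t}))=\{[p],\ldots,[p^{t-1}]\}$ in the proof of Theorem~\ref{thm:rank}, i.e.\ $t-1$ vertices.) Your orbit argument under $1+\mathfrak m$ correctly shows that each class $[r]_R$ has $p$-power size --- indeed $\#[p^k]_{\mathbb Z_{p^t}}=\varphi(p^{t-k})$ --- but, as you yourself anticipated, that controls the sizes of the classes and not their \emph{number}, and no induction on the length of $R$ can rescue a false conclusion. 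The paper offers no proof of this half either: the corollary is claimed to be ``immediate'' from Theorem~\ref{thm:verticesCZDG}, but the only immediate consequence for local $R$ is $\#V(\Gamma_E(R))+2=\#R_E$, and $\#R_E$ need not be a prime power even though $\#R$ is. So the difficulty you ran into is genuine, and the correct resolution is a counterexample (or a corrected statement, e.g.\ that $\#V(\Gamma(R))+1=\#Z(R)$ is a prime power for local $R$), not a longer proof.
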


Note that the converse of the second statement is not true in general, i.e. $\#V(\Gamma_E(R)) + 2$ being a prime power does not imply that $R$ is a local ring, see for example $R \cong \mathbb Z_2 \times \mathbb Z_2$.

\begin{example}
		Let $R = \mathbb Z_3[[X,Y]]/(XY,X^3,Y^3,X^2-Y^2)$. The corresponding compressed zero-divisor graph $\Gamma_E(R)$ has 5 vertices, see Figure~\ref{fig:5}. Corollary~\ref{cor:local} shows that, therefore, $R$ has to be a local ring.
	
		\begin{figure}[H]
		\includegraphics[width=0.6\textwidth]{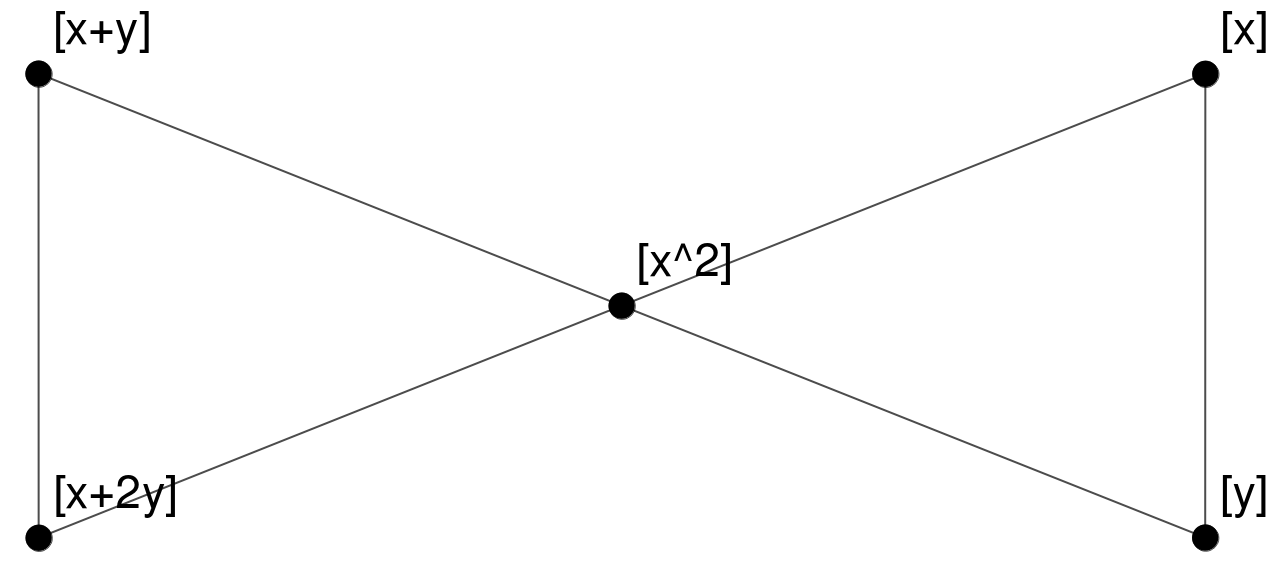}
		\caption{Compressed Zero-divisor graph $\Gamma_E(R)$ for $R=\mathbb Z_3[[X,Y]]/(XY,X^3,Y^3,X^2-Y^2)$.}
		\label{fig:5}
	\end{figure}
\end{example}

Finally, we end up with:

\begin{theorem} \label{thm:nullity1}
	Let $R \cong R_1 \times \ldots \times R_r$ with local rings $R_i$.  Then the nullity of the zero-divisor graph $\Gamma(R)$ is at least $$\eta(\Gamma(R)) \geq \prod_{i=1}^r \# R_i - \prod_{i=1}^r \#U(R_i)  -  \prod_{i=1}^r \big(\# V(\Gamma_E({R_i})) + 2 \big) + 1.$$
\end{theorem}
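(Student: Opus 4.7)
The plan is to observe that the stated bound can be rewritten purely in graph-theoretic terms as
\[
\eta(\Gamma(R)) \;\geq\; \#V(\Gamma(R)) - \#V(\Gamma_E(R)),
\]
since by Theorems~\ref{thm:verticesZDG} and~\ref{thm:verticesCZDG},
\[
\#V(\Gamma(R)) - \#V(\Gamma_E(R)) = \prod_{i=1}^r \# R_i - \prod_{i=1}^r \#U(R_i) - \prod_{i=1}^r \bigl(\# V(\Gamma_E(R_i)) + 2 \bigr) + 1.
\]
So the task reduces to producing many linearly independent elements in $\ker A(\Gamma(R))$, one fewer than the size of each equivalence class $[r]_R$.

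The key step is a twin-row argument. If $r,s \in Z^*(R)$ satisfy $\ann_R(r) = \ann_R(s)$, then for every vertex $v$ of $\Gamma(R)$ we have $rv = 0 \iff v \in \ann_R(r) = \ann_R(s) \iff sv = 0$; hence the rows of $A(\Gamma(R))$ indexed by $r$ and $s$ agree at every coordinate outside $\{r,s\}$. The two remaining coordinates split into cases: either $r^2 = 0$, in which case $r \in \ann_R(r) = \ann_R(s)$ forces $rs = s^2 = 0$ and all four entries equal $1$ (using loops); or $r^2 \neq 0$, in which case $r \notin \ann_R(r) = \ann_R(s)$ forces $rs, s^2 \neq 0$ and all four entries equal $0$. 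In either case the two rows are identical, so $e_r - e_s \in \ker A(\Gamma(R))$.

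Iterating within each class $[r]_R$ of size $k$ produces $k-1$ linearly independent kernel vectors, and the supports across different classes are disjoint, so these vectors remain independent when collected over all classes. Summing over the classes (which are exactly the vertices of $\Gamma_E(R)$) gives $\eta(\Gamma(R)) \geq \sum_{[r]_R}(\#[r]_R - 1) = \#V(\Gamma(R)) - \#V(\Gamma_E(R))$, which is the desired bound after substituting the formulas above.

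The only point that requires genuine care — and what initially looked like the main obstacle — is the behaviour on the two diagonal entries and the single in-class off-diagonal entry, because here loops intervene; however, the observation that within a single class one uniformly has either $r^2 = rs = s^2 = 0$ or $r^2, rs, s^2$ all nonzero makes both cases collapse into the same conclusion, so no further subtlety is needed.
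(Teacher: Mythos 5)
Your proposal is correct and follows essentially the same route as the paper: reduce the bound to $\eta(\Gamma(R)) \geq \#V(\Gamma(R)) - \#V(\Gamma_E(R))$ via Theorems~\ref{thm:verticesZDG} and~\ref{thm:verticesCZDG}, and then observe that all elements of a class $[r]_R$ give identical rows of $A(\Gamma(R))$, so the rank is at most $\#V(\Gamma_E(R))$. Your explicit verification that the diagonal (loop) entries and the in-class off-diagonal entry also agree is a welcome extra level of care that the paper's one-line justification leaves implicit.
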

\begin{proof}
	Each element  of $[r]_R \in V(\Gamma_E(R))$ contributes exactly the same row to the adjacency matrix $A(\Gamma(R))$. Thus, $\rank A(\Gamma(R)) \leq \#Z^*(R_E) = \# V(\Gamma_E(R))$. Since $\eta(\Gamma(R)) = \dim A(\Gamma(R)) - \rank(A(\Gamma(R)))$ and $\dim A(\Gamma(R)) =  \#Z^*(R) = \#V(\Gamma(R))$, we have that $\eta(\Gamma(R)) \geq \#V(\Gamma(R)) - \#V(\Gamma_E(R))$. The statement follows with Theorems~\ref{thm:verticesZDG}~ and~\ref{thm:verticesCZDG}. 
\end{proof}

\section{Spectra of zero-divisor graphs of direct products of rings of integers modulo $n$} \label{sec:integers}
As already observed by Young~\cite{Young2015}, the adjacency matrix of the compressed zero-divisor graph is a so-called \textit{equitable partition} of the adjacency matrix of $\Gamma(R)$. A formal definition for this is given in \cite{Brouwer2012}. We define the \textit{weighted adjacency matrix} $\mathcal A(\Gamma_E(R))$ of the compressed zero-divisor graph as the matrix with $(i,j)$-th entry $$\mathcal A(\Gamma_E(R))_{i,j} = \begin{cases}
0, & \text{if}~ A(\Gamma_E(R))_{i,j} = 0, \\
\# [j]_R, & \text{else.}
\end{cases}$$
From \cite[Lemma 2.3.1]{Brouwer2012} it follows that every eigenvalue of $\mathcal A(\Gamma_E(R))$ is also an eigenvalue of $A(\Gamma(R))$. In general, it is not clear whether these eigenvalues are exactly the non-zero eigenvalues of $\Gamma(R)$, i.e. whether $\mathcal A(\Gamma_E(R))$ always has full rank. But assuming that $R$ is a product of rings of integers modulo $n$, we can prove the following:

\begin{theorem} \label{thm:rank}
	Let $R \cong \mathbb Z_{{p_1}^{t_1}} \times \ldots \times \mathbb Z_{{p_r}^{t_r}}$ for prime numbers $p_j$ and $r,t_j \in \mathbb N$. Then,
	$$\rank A(\Gamma(R)) = \rank \mathcal A (\Gamma_E(R)) = \# V(\Gamma_E(R)).$$
\end{theorem}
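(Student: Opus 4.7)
The plan is to reduce both rank equalities to the single assertion that the $0$-$1$ matrix $A(\Gamma_E(R))$ is invertible, and then to prove invertibility via a complementary-minor (Jacobi) argument on a Kronecker product built from the local factors. For the reduction, note that since $A(\Gamma(R))_{x,y} = [xy = 0]$ depends only on the classes $[x]_R$ and $[y]_R$, the matrix $A(\Gamma(R))$ equals a ``blow-up'' $U\,A(\Gamma_E(R))\,U^{\top}$, where $U$ is the class-membership matrix (of full column rank); hence $\rank A(\Gamma(R)) = \rank A(\Gamma_E(R))$. Moreover $\mathcal{A}(\Gamma_E(R)) = A(\Gamma_E(R))\,D$ with $D = \operatorname{diag}(\#[y]_R)$ invertible, so the weighted version has the same rank. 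It therefore suffices to show that $\det A(\Gamma_E(R)) \neq 0$.

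Next I would parametrize equivalence classes of $R = \mathbb{Z}_{p_1^{t_1}} \times \cdots \times \mathbb{Z}_{p_r^{t_r}}$ by the tuple of $p_j$-adic valuations $(s_1,\dots,s_r)$ with $0 \le s_j \le t_j$, so that $(s)\sim(s')$ in $\Gamma_E(R)$ iff $s_j+s_j' \ge t_j$ for every $j$. Introducing the staircase matrix $M_t(s,s') := [s+s' \ge t]$ of size $(t+1)\times(t+1)$, the Kronecker product $M := M_{t_1}\otimes \cdots \otimes M_{t_r}$ is the full adjacency matrix on all such tuples, and $A(\Gamma_E(R))$ is the principal submatrix of $M$ obtained by deleting the two rows and columns indexed by $(0,\dots,0)$ (the units) and $(t_1,\dots,t_r)$ (the element~$0$). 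Column reversal turns each $M_t$ into a lower-unitriangular matrix, so $\det M_t = \pm 1$ and hence $M$ is invertible.

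The key input is an explicit form for the inverse: a short telescoping check shows $M_t^{-1}(i,j) = +1$ on the antidiagonal $i+j = t$, $-1$ on the adjacent diagonal $i+j = t-1$, and $0$ elsewhere, so $M^{-1} = \bigotimes_{j} M_{t_j}^{-1}$. Jacobi's complementary-minor identity then gives
\[
	\det A(\Gamma_E(R)) \;=\; \pm\, \det M \cdot \det\bigl((M^{-1})_{J,I}\bigr),
\]
where $I = J = \{(0,\dots,0),\,(t_1,\dots,t_r)\}$. From the formula for $M^{-1}$, the two off-diagonal entries of this $2\times 2$ minor equal $\prod_j M_{t_j}^{-1}(0,t_j) = 1$ (since $0+t_j=t_j$), whereas the $\bigl((t_1,\dots,t_r),(t_1,\dots,t_r)\bigr)$-entry is $\prod_j M_{t_j}^{-1}(t_j,t_j) = 0$ (since $2t_j > t_j$ for each $t_j \ge 1$); the minor thus has determinant $-1$, giving $|\det A(\Gamma_E(R))| = 1$ as required. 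The main obstacle is guessing and verifying the exact shape of $M_t^{-1}$; once this inverse is in hand, the tensor-product and Jacobi steps are mechanical.
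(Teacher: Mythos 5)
Your proof is correct, and although it follows the same broad strategy as the paper --- pass to the compressed matrix, exploit the Kronecker factorization of the extended compressed adjacency matrix over the local factors $\mathbb Z_{p_j^{t_j}}$, and then account for deleting the two classes $[0]_R$ and $[1]_R$ --- it handles the decisive final step by a genuinely different device. The paper works with the weighted matrix $\mathcal A(\Gamma_E(R))$: it observes that each local factor $\mathcal A(\eg_E(R_i))$ is anti-triangular with nonzero anti-diagonal, uses rank-multiplicativity of the Kronecker product to get invertibility of $\mathcal A(\eg_E(R))$, and then passes to the principal submatrix $\mathcal A(\Gamma_E(R))$ by appealing to the bordered shape of $\mathcal A(\eg_E(R))$ (the $[1]_R$-row is $(0,\dots,0,1)$ and the $[1]_R$-column has a single nonzero entry, so a double cofactor expansion reduces $\det\mathcal A(\eg_E(R))$ to a nonzero multiple of $\det\mathcal A(\Gamma_E(R))$); this is the one point where the paper is terse, since in general a principal submatrix of an invertible matrix need not be invertible. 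You instead work with the $0$--$1$ matrix (legitimate, as the two versions differ by an invertible diagonal factor), compute the explicit inverse of the staircase factor $M_t$, and apply Jacobi's complementary-minor identity to the two deleted indices; the resulting $2\times 2$ minor has determinant $-1$ independently of the $t_j$, which even gives the sharper statement $\abs{\det A(\Gamma_E(R))}=1$. Two details you rely on are worth making explicit but do check out: the blow-up identity $A(\Gamma(R))=U\,A(\Gamma_E(R))\,U^{\top}$ is valid precisely because the paper retains loops (so the diagonal blocks match), and your formula for $M_t^{-1}$ verifies by the telescoping computation $[s\ge j]-[s\ge j+1]=[s=j]$. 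On balance, your Jacobi argument is the more robust treatment of the deletion step, while the paper's cofactor expansion is shorter but leans on the particular bordered structure of $\mathcal A(\eg_E(R))$.
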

\begin{proof}
	We can easily see that $\rank A(\Gamma(R)) =\rank \mathcal A (\Gamma_E(R)) $ since for every $r \in R$, each element of $[r]_R$ contributes exactly the same row to the adjacency matrix $A(\Gamma(R))$. Thus, it suffices to show that $\rank \mathcal A (\Gamma_E(R)) = \# V(\Gamma_E(R))$. The matrix $\mathcal A(\Gamma_E(R_i))$ for $R_i = \mathbb Z_{{p_i}^{t_i}}$ is of the form $$\begin{pmatrix}
	0 & 0 & 0 & \cdots & 0 & 0 & p_i-1 \\
	0 & 0 & 0 &  \cdots  & 0 & p_i(p_i-1) & p_i-1 \\
	0 & 0 & 0 & \cdots  & p_i^2(p_i-1) & p_i(p_i-1) & p_i-1 \\
	\vdots & \vdots & \vdots &  \iddots  & \vdots &\vdots & \vdots \\
	0 & 0 & p_i^{t_i-4}(p_i-1) & \cdots & p_i^2(p_i-1) & p_i(p_i-1) & p_i-1 \\
	0 & p_i^{t_i-3}(p_i-1) & p_i^{t_i-4}(p_i-1) &\cdots & p_i^2(p_i-1) & p_i(p_i-1) & p_i-1 \\
	p_i^{t_i-2}(p_i-1) & p_i^{t_i-3}(p_i-1) & p_i^{t_i-4}(p_i-1) &\cdots & p_i^2(p_i-1) & p_i(p_i-1) & p_i-1 \\
	\end{pmatrix}$$ since $V(\Gamma_E(R_i)) = \{[p_i]_{R_i},[p_i^2]_{R_i},\ldots,[p_i^{t_i-1}]_{R_i}\}$ and $$\#[p_i^k]_{R_i} =  \#\{x \mid \gcd(x,p_i^{t_i}) = p_i^k \} =\varphi({p_i}^{t_i}/p_i^k) = {p_i}^{t_i-k-1}({p_i}-1).$$ Obviously, this matrix has full rank $\#V(\Gamma_E(R_i))$. Now, the graph $\eg_E(R_i)$ arises from $\Gamma_E(R_i)$ by adding the vertices $[1]_{R_i}$ and $[0]_{R_i}$ and the edges $\{[1]_{R_i},[0]_{R_i}\}$,$\{[0]_{R_i},[r]_{R_i}\}$ for all $[r]_{R_i} \in V(\Gamma_E(R_i))$. With an appropriate enumeration of the vertices of $\eg_E(R_i)$, it follows that the matrix $\mathcal A(\eg_E(R_i))$ equals $$\begin{pmatrix}
	0 & \begin{matrix}0 & \ldots & 0 \end{matrix} & 1 \\ \begin{matrix}
	0 \\ \vdots \\ 0
	\end{matrix} &
	\text{\Huge{ $\mathcal A(\Gamma_E(R_i))$}}
	& \begin{matrix}
	 1 \\ \vdots \\ 1
	 \end{matrix} \\
	 p_i^{t_i-1}(p-1) & \begin{matrix} p_i^{t_i-2}(p-1) & \ldots &p_i-1 \end{matrix} & 1
	\end{pmatrix}.$$ This matrix has full rank, too. Since $\eg_E(R) \cong \eg_E(R_1) \times \ldots \times \eg_E(R_r)$, the matrix $\mathcal A(\eg_E(R))$ equals the Kronecker product $\mathcal A(\eg_E(R_1)) \otimes \ldots \otimes \mathcal A(\eg_E(R_r))$ which has the form 
	$$\begin{pmatrix}
	0 & \begin{matrix}0 & \ldots & 0 \end{matrix} & 1 \\ \begin{matrix}
	0 \\ \vdots \\ 0
	\end{matrix} &
	\text{\Huge{ $\mathcal A(\Gamma_E(R))$}}
	& \begin{matrix}
	1 \\ \vdots \\ 1
	\end{matrix} \\
	x_1 & \begin{matrix} x_2 & \ldots &x_{\# V(\Gamma_E(R))+1}\end{matrix} & 1
	\end{pmatrix}$$
	for non-zero entries $x_j$. By the fact that the rank of the Kronecker product of two matrices equals the product of the ranks of these two matrices, we finally conclude that $\mathcal A(\eg_E(R))$, and therefore also $\mathcal A(\Gamma_E(R))$ has full rank, i.e. $\rank \mathcal A(\Gamma_E(R)) = \# V(\Gamma_E(R))$.
\end{proof}

With this result, we are able to improve Theorem~\ref{thm:nullity1}:

\begin{theorem} \label{thm:nullity2}
	Let $R \cong \mathbb \mathbb Z_{{p_1}^{t_1}} \times \ldots \times \mathbb Z_{{p_r}^{t_r}}$ for prime numbers $p_j$ and $r,t_j \in \mathbb N$. Then the zero-divisor graph $\Gamma(R)$ has $$\prod_{i=1}^r (t_i+1)-2$$ non-zero eigenvalues, and the nullity of $\Gamma(R)$ equals $$\eta(\Gamma(R)) = \prod_{i=1}^r p_i^{t_i-1} \Big( \prod_{i=1}^r p_i - \prod_{i=1}^r (p_i-1) \Big) - \prod_{i=1}^r (t_i+1) +1.$$
\end{theorem}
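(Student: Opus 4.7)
The plan is to combine Theorem~\ref{thm:rank} with the vertex-count formulas of Theorems~\ref{thm:verticesZDG} and~\ref{thm:verticesCZDG}; once those are in hand the proof is essentially bookkeeping.

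First I would observe that Theorem~\ref{thm:rank} gives $\rank A(\Gamma(R)) = \#V(\Gamma_E(R))$, so the number of \emph{non-zero} eigenvalues of $\Gamma(R)$ (counted with multiplicity) equals $\#V(\Gamma_E(R))$, and the nullity is
\[
\eta(\Gamma(R)) = \dim A(\Gamma(R)) - \rank A(\Gamma(R)) = \#V(\Gamma(R)) - \#V(\Gamma_E(R)).
\]
Thus the task reduces to evaluating these two cardinalities for $R \cong \mathbb{Z}_{p_1^{t_1}} \times \cdots \times \mathbb{Z}_{p_r^{t_r}}$.

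Next I would compute the local data. For $R_i = \mathbb{Z}_{p_i^{t_i}}$ the equivalence classes under the relation defining $R_E$ are parametrised by the divisors $p_i^k$ of $p_i^{t_i}$, with $[p_i^k]_{R_i}$ consisting of the elements of $p$-adic valuation exactly $k$; the classes $[0]_{R_i}$ and $[1]_{R_i}$ correspond to $k=t_i$ and $k=0$, leaving $\#V(\Gamma_E(R_i)) = t_i - 1$ and hence $\#(R_i)_E = t_i + 1$. Plugging into Theorem~\ref{thm:verticesCZDG} gives
\[
\#V(\Gamma_E(R)) = \prod_{i=1}^r (t_i + 1) - 2,
\]
which is exactly the claimed count of non-zero eigenvalues.

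For $\#V(\Gamma(R))$ I would invoke Theorem~\ref{thm:verticesZDG} together with $\#R_i = p_i^{t_i}$ and $\#U(R_i) = \varphi(p_i^{t_i}) = p_i^{t_i-1}(p_i - 1)$, factoring out $\prod_i p_i^{t_i-1}$:
\[
\#V(\Gamma(R)) = \prod_{i=1}^r p_i^{t_i-1}\Bigl(\prod_{i=1}^r p_i - \prod_{i=1}^r (p_i - 1)\Bigr) - 1.
\]
Subtracting $\#V(\Gamma_E(R)) = \prod_i(t_i+1) - 2$ yields the stated formula for $\eta(\Gamma(R))$. There is no genuine obstacle here: the content of the theorem has been absorbed into Theorem~\ref{thm:rank} (the full-rank statement for $\mathcal A(\Gamma_E(R))$, which guarantees that the crude rank bound used in Theorem~\ref{thm:nullity1} is in fact sharp), and the remainder is a direct arithmetic consolidation of the three earlier theorems.
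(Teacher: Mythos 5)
Your proposal is correct and follows essentially the same route as the paper's own proof: invoke Theorem~\ref{thm:rank} to identify the number of non-zero eigenvalues with $\#V(\Gamma_E(R))$, then evaluate $\#V(\Gamma(R))$ and $\#V(\Gamma_E(R))$ via Theorems~\ref{thm:verticesZDG} and~\ref{thm:verticesCZDG} using $\#V(\Gamma_E(\mathbb Z_{p_i^{t_i}})) = t_i-1$ and $\#U(\mathbb Z_{p_i^{t_i}}) = p_i^{t_i-1}(p_i-1)$. No gaps; the remaining step is the same arithmetic consolidation the paper performs.
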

\begin{proof}
	By Theorem~\ref{thm:rank}, the number of non-zero eigenvalues of $\Gamma(R)$ equals the number of vertices of the compressed zero-divisor graph $\Gamma_E(R)$. Since $V(\Gamma_E(R_i)) = \{[p_i]_{R_i},[p_i^2]_{R_i},\ldots,[p_i^{t_i-1}]_{R_i}\}$, we deduce form Theorem~\ref{thm:verticesCZDG} that $$\#V(\Gamma_E(R)) = \prod_{i=1}^r (t_i+1)-2.$$ Similar as in the proof of Theorem~\ref{thm:nullity1}, we see that $\eta(\Gamma(R)) = \#V(\Gamma(R)) - \#V(\Gamma_E(R))$. The number of units in $R_i$ ($ = \#U(R_i) = \#V(\ug(R_i))$) is given by $\varphi(p_i^{t_i}) = p_i^{t_i-1}(p_i-1)$. Thus, by Theorem~\ref{thm:verticesZDG}, we get that 
	\begin{equation*}
		\begin{split}
			\#V(\Gamma(R)) &= \prod_{i=1}^r p_i^{t_i} - \prod_{i=1}^r p_i^{t_i-1}(p_i-1) -1\\
			&= \prod_{i=1}^r p_i^{t_i-1} \Big( \prod_{i=1}^r p_i - \prod_{i=1}^r (p_i-1) \Big) - 1,
		\end{split}
	\end{equation*} and, therefore, the statement follows.
\end{proof}
Note that the number of non-zero eigenvalues of $\Gamma(\mathbb Z_{{p_1}^{t_1}} \times \ldots \times \mathbb Z_{{p_r}^{t_r}})$ does not depend on the prime numbers $p_i$ but on the powers $t_i$ only.

Now, we can easily determine the eigenvalues of $\Gamma(R)$ for $R \cong \mathbb Z_{{p_1}^{t_1}} \times \ldots \times \mathbb Z_{{p_r}^{t_r}}$. Theorem~\ref{thm:nullity2} gives us the number of eigenvalues equal to zero. The non-zero eigenvalues can be computed as in the proof of Theorem~\ref{thm:rank}. We illustrate this in the following examples. Note that the eigenvalues of the graphs $\Gamma(\mathbb Z_{p^2}), \Gamma(\mathbb Z_{p^3}), \Gamma(\mathbb Z_{p} \times \mathbb Z_{q})$ and $\Gamma(\mathbb Z_{p^2} \times \mathbb Z_{q})$ for prime numbers $p \neq q$ were already determined by Young~\cite{Young2015} resp. Surendranath Reddy et. al.~\cite{Reddy2017}, and the ones of $\Gamma(\mathbb Z_p \times \mathbb Z_p)$ by Sharma et. al.~\cite{Sharma2011}.

\begin{example} \label{ex:1}
		Let $p$ be a prime number and $R \cong \mathbb Z_p \times \mathbb Z_p \times \mathbb Z_p$. By Theorem~\ref{thm:nullity2} the multiplicity of the eigenvalue 0 of $\Gamma(R)$ equals $$\eta(\Gamma(R)) = (p^3-(p-1)^3)-2^3+1 = 3(p+1)(p-2).$$ The ring $\mathbb Z_p$ has no zero-divisors and, therefore, $\Gamma(\mathbb Z_p)$ is the empty graph. Thus, the matrix $\mathcal A(\eg_E(\mathbb Z_p))$ is given by $$\mathcal A(\eg_E(\mathbb Z_p)) = \begin{pmatrix}
		0 & 1 \\ p-1 & 1
		\end{pmatrix}.$$ Now, we compute the Kronecker product $$\mathcal A(\eg_E(\mathbb Z_p)) \otimes \mathcal A(\eg_E(\mathbb Z_p)) \otimes \mathcal A(\eg_E(\mathbb Z_p))$$ which yields the matrix $$\begin{pmatrix}
			0 & 0 & 0 & 0 & 0 & 0 & 0 & 1 \\
			0 & 0 & 0 & 0 & 0 &0 & p-1 & 1 \\
			0 & 0 & 0 & 0 & 0 & p-1 & 0 & 1 \\
			0 & 0 & 0 & 0 & (p-1)^2 & p-1 & p-1 & 1 \\
			0 & 0 & 0 & p-1 & 0 & 0 & 0 & 1 \\
			0 & 0 & (p-1)^2 & p-1 & 0 & 0 & p-1 & 1 \\
			0 & (p-1)^2 & 0 & p-1 & 0 & p-1 & 0 & 1 \\
			(p-1)^3 & (p-1)^2 & (p-1)^2 & (p-1) & (p-1)^2 & p-1 & p-1 & 1
		\end{pmatrix}.$$ Hence, $\mathcal A (\Gamma_E(R))$ equals the submatrix  $$\begin{pmatrix}
		0 & 0 & 0 & 0 &0 & p-1 \\
		0 & 0 & 0 & 0 & p-1 & 0 \\
		0 & 0 & 0 & (p-1)^2 & p-1 & p-1 \\
		0 & 0 & p-1 & 0 & 0 & 0 \\
		0 & (p-1)^2 & p-1 & 0 & 0 & p-1  \\
		(p-1)^2 & 0 & p-1 & 0 & p-1 & 0  
		\end{pmatrix},$$ which has characteristic polynomial $$\chi_{\Gamma(R)}(x)  = -(-1 + 3 p - 3 p^2 + p^3 + (1-p)x - x^2)^2 (-1 + 3 p - 3 p^2 + p^3 + 2(p-1)x - x^2).$$ The roots of this polynomial, i.e. the non-zero eigenvalues of $\Gamma(R)$, are $$\lambda_{1,2} = \frac{1}{2} \big(1-p \pm (p-1) \sqrt{4p-3} \big), \quad \lambda_{3,4} = p-1 \pm \sqrt{p-2p^2+p^3},$$ and, therefore, the spectrum of $\Gamma(R)$ equals $$\spec(\Gamma(R)) = \big\{ \lambda_1^{[2]}, \lambda_2^{[2]}, \lambda_3^{[1]}, \lambda_4^{[1]}, 0^{[3(p+1)(p-2)]} \big\}.$$
\end{example}

\begin{example}
	Let $p$ be a prime number and $R \cong \mathbb Z_p \times \mathbb Z_p \times \mathbb Z_p \times \mathbb Z_p$. By Theorem~\ref{thm:nullity2} the multiplicity of the eigenvalue 0 of $\Gamma(R)$ equals $$\eta(\Gamma(R)) = p^4-(p-1)^4-2^4-1.$$ Analogously as in Example~\ref{ex:1}, we find the matrix $\mathcal A (\Gamma_E(R))$ as a submatrix of the Kronecker product $$\mathcal A(\eg_E(\mathbb Z_p)) \otimes \mathcal A(\eg_E(\mathbb Z_p)) \otimes \mathcal A(\eg_E(\mathbb Z_p)) \otimes \mathcal A(\eg_E(\mathbb Z_p)).$$ The characteristic polynomial of this matrix is 
	\begin{equation*}
		\begin{split}
			\chi_{\Gamma(R)}(x) = &-(1 - 2 p + p^2 - x)^5 (1 - 2 p + p^2 + x) \times \\ &\times \big(1 - 4 p + 6 p^2 - 4 p^3 + 
			p^4 + (1+p-2p^2)x + x^2 \big) \times\\ & \times \big(1 - 4 p + 6 p^2 - 4 p^3 + p^4 + 
			(1-3p+2p^2) x + x^2 \big)^3
		\end{split}
	\end{equation*} and has roots $$\lambda_1 = (p-1)^2,\quad \lambda_2 = -p^2+p-1,$$ $$\lambda_{3,4} = \frac{1}{2} \big(- 2 p^2 +3p -1 \pm (p-1) \sqrt{4 p-3} \big),$$ $$\lambda_{5,6} = \frac{1}{2} \big(2 p^2 -p -1 \pm \sqrt{3} \sqrt{4 p^3-9p^2+6p-1} \big).$$
	Hence, the spectrum of $\Gamma(R)$ is given by $$\spec(\Gamma(R)) = \big\{\lambda_1^{[5]},\lambda_2^{[1]},\lambda_3^{[3]},\lambda_4^{[3]},\lambda_5^{[1]},\lambda_6^{[1]}, 0^{[p^4-(p-1)^4-2^4-1]} \big\}.$$
\end{example}

\begin{example}
	Unfortunately, if we consider not only products of the ring $\mathbb Z_p$ but also of rings of the form $\mathbb Z_{p^t}$ for $t>1$ or of the form $\mathbb Z_q$ for a prime $q \neq p$, the eigenvalues of $\Gamma(R)$ get very cumbersome. However, at least we want to include the characteristic polynomials of the graphs $\Gamma(\mathbb Z_p \times \mathbb Z_p \times \mathbb Z_q)$ and $\Gamma(\mathbb Z_{p^2} \times \mathbb Z_p)$. Note that $$\mathcal A(\eg_E(\mathbb Z_p)) = \begin{pmatrix}
		0 & 0 & 1 \\ 0 & p-1 & 1 \\ p(p-1) & p-1 & 1
	\end{pmatrix}.$$ With the same method as in the examples before, we find the polynomials 
	\begin{equation*}
		\begin{split}
			\chi_{\mathbb Z_p \times \mathbb Z_p \times \mathbb Z_q}(x) = &-(p-1)^6 (q-1)^3 + (p-1)^3 (q-1) \big(p (3 q-2)-q \big) x^2 -\\ & - 
			2 (p-1)^2 (q-1) x^3 - (p-1) \big(p (3 q-2)-q \big) x^4 + x^6,
		\end{split}
	\end{equation*}
	\begin{equation*}
		\begin{split}
			\chi_{\mathbb Z_{p^2} \times \mathbb Z_p}(x) = (p-1)^5 p + (p-1)^3 p x - 
			2 (p-1)^2 p x^2 - (p-1) x^3 + x^4.
		\end{split}
	\end{equation*}
\end{example}

\begin{remark}
	It is clear that two rings are isomorphic only if their respective zero-divisor graphs are isomorphic. Moreover, two graphs are isomorphic only if they have the same characteristic polynomial. Thus, in order to see that two rings are non-isomorphic, it might help to compare the characteristic polynomials of their corresponding zero-divisor graphs. 
\end{remark}

\section{A relation between $\chi_{\Gamma(R)}$ and $\chi_{\eg(R)}$} \label{sec:relation}
The main interest in spectral graph theory of building graph products is that there are relations between the eigenvalues of graphs and the eigenvalues of their product. For example, we already mentioned that if $\lambda_i, \mu_i$ denote the eigenvalues of graphs $G_1$ resp. $G_2$, then the eigenvalues of the direct product $G_1 \times G_2$ are exactly the values $\lambda_i \mu_j$. Similar results are also known for the point identification and the complete product of simple graphs: let $G-v$ denote the graph arising from removing the vertex $v$ of $G$ together with all its edges, and let $\overline{G}$ be the complement of $G$ (that is, the graph with same vertex set as $G$, where two distinct vertices are adjacent whenever they are non-adjacent in $G$), then the following holds:

\begin{lemma} \label{lemma:1}
	Let $G_1$ and $G_2$ be simple graphs with $v \in V(G_1)$ and $w \in V(G_2)$. The point-identification $v=w$ yields 
	\begin{equation*}
		\begin{split}
			\chi_{G_1 \bullet G_2}(x) = 
			\chi_{G_1}(x) \chi_{G_2-w}(x) &+ \chi_{G_1-v}(x) \chi_{G_2}(x)\\ &- x \chi_{G_1-v}(x)\chi_{G_2-w}(x).
		\end{split}
	\end{equation*}
\end{lemma}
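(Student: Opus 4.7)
The plan is to set up $A(G_1 \bullet G_2)$ in block form where the shared vertex $u$ (the image of $v$ and $w$ after identification) is listed first, and then to use the Schur complement to reduce the determinant to expressions already encoding $\chi_{G_1}$, $\chi_{G_2}$, $\chi_{G_1-v}$ and $\chi_{G_2-w}$. Concretely, write
\[
A(G_1) = \begin{pmatrix} 0 & b^{T} \\ b & A(G_1-v) \end{pmatrix},\qquad
A(G_2) = \begin{pmatrix} 0 & c^{T} \\ c & A(G_2-w) \end{pmatrix},
\]
where $b$ and $c$ are the adjacency vectors of $v$ and $w$ respectively. Since $u$ is the only shared vertex, and since the vertex sets $V(G_1)\setminus\{v\}$ and $V(G_2)\setminus\{w\}$ are disjoint with no edges between them, the adjacency matrix of $G_1 \bullet G_2$ takes the block form
\[
A(G_1 \bullet G_2) = \begin{pmatrix} 0 & b^{T} & c^{T} \\ b & A(G_1-v) & 0 \\ c & 0 & A(G_2-w) \end{pmatrix}.
\]

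Set $M_1 = xI - A(G_1-v)$ and $M_2 = xI - A(G_2-w)$. The first step is to apply the Schur complement to $xI - A(G_1 \bullet G_2)$ with respect to the block-diagonal lower-right submatrix $\operatorname{diag}(M_1, M_2)$, which yields
\[
\chi_{G_1 \bullet G_2}(x) = \chi_{G_1-v}(x)\,\chi_{G_2-w}(x)\,\bigl(x - b^{T} M_1^{-1} b - c^{T} M_2^{-1} c\bigr).
\]
The second step is to apply the same Schur complement trick to $\chi_{G_1}(x)$ and $\chi_{G_2}(x)$, giving
\[
\chi_{G_1}(x) = \chi_{G_1-v}(x)\bigl(x - b^{T} M_1^{-1} b\bigr), \qquad \chi_{G_2}(x) = \chi_{G_2-w}(x)\bigl(x - c^{T} M_2^{-1} c\bigr).
\]
Solving for $b^{T}M_1^{-1} b$ and $c^{T} M_2^{-1} c$ and substituting into the expression for $\chi_{G_1 \bullet G_2}(x)$ produces, after a small algebraic simplification, the claimed identity.

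The only technicality is that the Schur complement requires $M_1$ and $M_2$ to be invertible; however, both sides of the target identity are polynomials in $x$, and $M_1(x), M_2(x)$ fail to be invertible only at the finitely many eigenvalues of $A(G_1-v)$ and $A(G_2-w)$. Hence the identity holds on a Zariski-dense set and therefore everywhere. I expect this polynomial continuation step and the careful bookkeeping of signs in the Schur complement to be the only place where one can easily slip, while the block decomposition of $A(G_1 \bullet G_2)$ and the final algebraic simplification are routine.
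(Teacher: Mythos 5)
Your proof is correct. Note that the paper does not prove this lemma at all --- it simply cites Cvetkovi\'c--Doob--Sachs, where the identity is classically obtained from Schwenk-type vertex-deletion expansions of the characteristic polynomial. Your Schur-complement argument is a clean, self-contained linear-algebra alternative: the key identities
\[
\chi_{G_1\bullet G_2}(x)=\chi_{G_1-v}(x)\,\chi_{G_2-w}(x)\bigl(x-b^{T}M_1^{-1}b-c^{T}M_2^{-1}c\bigr),\qquad
\chi_{G_1}(x)=\chi_{G_1-v}(x)\bigl(x-b^{T}M_1^{-1}b\bigr)
\]
(and the analogue for $G_2$) combine exactly as you say, since substituting gives $x-b^{T}M_1^{-1}b-c^{T}M_2^{-1}c=\chi_{G_1}/\chi_{G_1-v}+\chi_{G_2}/\chi_{G_2-w}-x$, and clearing denominators yields the claimed formula. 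The block decomposition of $A(G_1\bullet G_2)$ is right (the only interaction between the two halves is through the coalesced vertex, whose diagonal entry is $0$ for simple graphs), and the polynomial-identity argument disposing of the invertibility of $M_1,M_2$ is standard and correctly invoked. A small bonus of your method, worth recording: if exactly one of $v,w$ carries a loop, the corner entry $0$ is replaced by $1$ in both $xI-A(G_1\bullet G_2)$ and the corresponding $xI-A(G_i)$, and the identical cancellation goes through --- which is precisely the extension to loops that the paper asserts without proof just after the lemma.
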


\begin{lemma} \label{lemma:2}
	Let $G_1$ and $G_2$ be simple graphs with $\#V(G_1) = n_1$ and $\#V(G_2) = n_2$. Then the characteristic polynomial of the complete product of $G_1$ and $G_2$ equals 
	\begin{equation*}
		\begin{split}
			\chi_{G_1 \nabla G_2}(x) = 
			(-1)^{n_2}\chi_{G_1}(x) \chi_{\overline{G_2}}(-x-1) &+ (-1)^{n_1}\chi_{G_2}(x) \chi_{\overline{G_1}}(-x-1)\\ &- (-1)^{n_1+n_2} \chi_{\overline{G_1}}(-x-1)\chi_{\overline{G_2}}(-x-1).
		\end{split}
	\end{equation*}
\end{lemma}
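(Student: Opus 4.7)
The plan is to compute $\chi_{G_1\nabla G_2}(x)$ by writing the adjacency matrix in block form, reducing a block determinant via a Schur complement, and then trading the all-ones quadratic forms $\mathbf{1}^{T}(xI-A(G_i))^{-1}\mathbf{1}$ for values of $\chi_{\overline{G_i}}$ using the identity $A(\overline{G}) = J - I - A(G)$.

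First I would observe that, by the definition of the complete product,
\[
  A(G_1\nabla G_2) \;=\; \begin{pmatrix} A(G_1) & J \\ J^{T} & A(G_2) \end{pmatrix},
\]
where $J = \mathbf{1}_{n_1}\mathbf{1}_{n_2}^{T}$ is the $n_1\times n_2$ all-ones matrix. Treating $x$ as a formal variable (so $xI - A(G_1)$ is invertible over the field of rational functions) and applying the Schur-complement formula yields
\[
  \chi_{G_1\nabla G_2}(x) \;=\; \chi_{G_1}(x)\,\det\!\left(xI - A(G_2) - J^{T}(xI-A(G_1))^{-1}J\right).
\]
Since $J$ has rank one, $J^{T}(xI-A(G_1))^{-1}J \;=\; \beta_1(x)\,\mathbf{1}_{n_2}\mathbf{1}_{n_2}^{T}$, where $\beta_i(x) := \mathbf{1}_{n_i}^{T}(xI - A(G_i))^{-1}\mathbf{1}_{n_i}$. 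A single application of the matrix determinant lemma then gives
\[
  \chi_{G_1\nabla G_2}(x) \;=\; \chi_{G_1}(x)\,\chi_{G_2}(x)\,\bigl(1 - \beta_1(x)\beta_2(x)\bigr).
\]

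Next I would eliminate the $\beta_i$ by relating them to the complements. Because $A(\overline{G}) = J_n - I - A(G)$, we have $(-x-1)I - A(\overline G) = -(xI - A(G)) - J_n$, and a second use of the matrix determinant lemma on $\det(xI-A(G) + \mathbf{1}\mathbf{1}^{T}) = \chi_G(x)(1+\beta(x))$ yields the key identity
\[
  \chi_{G}(x)\bigl(1 + \beta(x)\bigr) \;=\; (-1)^{n}\,\chi_{\overline G}(-x-1),
\]
so $\chi_{G_i}(x)\beta_i(x) = (-1)^{n_i}\chi_{\overline{G_i}}(-x-1) - \chi_{G_i}(x)$. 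Substituting these into the bracket $\chi_{G_1}(x)\chi_{G_2}(x) - [\chi_{G_1}(x)\beta_1(x)]\cdot[\chi_{G_2}(x)\beta_2(x)]$ and expanding, the two occurrences of $\chi_{G_1}(x)\chi_{G_2}(x)$ cancel, and what remains is precisely the three-term expression stated in the lemma. Since both sides of the target identity are polynomials in $x$, the derivation (originally performed over rational functions in $x$) is valid for all $x$.

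The main obstacle is bookkeeping: getting the signs right in the complement identity and in the Schur complement step, and keeping track of which determinant lemma applies to which block. Once the clean relation $\chi_G(x)(1+\beta(x)) = (-1)^n\chi_{\overline G}(-x-1)$ is in hand, the rest is straightforward algebraic expansion.
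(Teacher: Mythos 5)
Your proof is correct. Note that the paper does not actually prove this lemma at all --- it only cites Cvetkovi\'c--Doob--Sachs --- so you have supplied a self-contained argument where the paper offers none. Your route (Schur complement on the block matrix $\bigl(\begin{smallmatrix} xI-A_1 & -J \\ -J^{T} & xI-A_2\end{smallmatrix}\bigr)$, followed by two applications of the matrix determinant lemma to reduce everything to $\beta_i(x)=\mathbf{1}^{T}(xI-A_i)^{-1}\mathbf{1}$, and then the identity $\chi_{G}(x)(1+\beta(x))=(-1)^{n}\chi_{\overline G}(-x-1)$ coming from $(-x-1)I-A(\overline G)=-(xI-A(G))-J$) is essentially a modern linear-algebraic repackaging of the classical proof: Cvetkovi\'c--Doob--Sachs obtain the same three-term formula by expressing the quantity $\mathbf{1}^{T}\operatorname{adj}(xI-A)\mathbf{1}$ (equivalently, the walk generating function) in terms of the characteristic polynomial of the complement, which is exactly your key identity in adjugate form. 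Your version has the advantage of being short and verifiable line by line; I checked the algebra, and the final expansion $P_1P_2-(Q_1-P_1)(Q_2-P_2)=P_1Q_2+P_2Q_1-Q_1Q_2$ with $Q_i=(-1)^{n_i}\chi_{\overline{G_i}}(-x-1)$ reproduces the stated formula exactly. The only point worth making explicit is the one you already flag: the intermediate manipulations require $(xI-A_i)^{-1}$, so they take place over the field of rational functions in $x$, and the conclusion descends to a polynomial identity because both sides are polynomials.
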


Proofs for these lemmas are given in \cite{Cvetkovic1980}. We can easily see that the formula in Lemma~\ref{lemma:1} still holds for graphs with loops if the graphs do not have loops on both vertices, $v$ and $w$. Moreover, Hwang~and~Park~\cite{Hwang2011} generalized the result of Lemma~\ref{lemma:2}:

\begin{lemma} \label{lemma:2.2}
	Let $A \in \mathbb R^{m \times m}, B \in \mathbb R^{n \times n}, a,c \in \mathbb R^m, b,d \in \mathbb R^n,$ $$M = \begin{pmatrix}
	A & ad^{\sf t} \\ bc^{\sf t} & B
	\end{pmatrix}$$ and $\tilde{A} = ac^{\sf t} -A, \tilde{B}=bd^{\sf t}-B$. Then \begin{equation*}
		\begin{split}
			\chi_{M}(x) = 
			(-1)^{m}\chi_{\tilde{A}}(-x) \chi_{B}(x) &+ (-1)^{n}\chi_{A}(x) \chi_{\tilde{B}}(-x)\\ &- (-1)^{m+n} \chi_{\tilde{A}}(-x)\chi_{\tilde{B}}(-x).
		\end{split}
	\end{equation*}
\end{lemma}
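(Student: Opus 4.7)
\emph{Plan.} My plan is to derive both sides of the identity from the two scalars
$$\alpha(x) := c^{\sf t}(xI-A)^{-1}a \quad\text{and}\quad \beta(x) := d^{\sf t}(xI-B)^{-1}b,$$
using the Schur complement formula (applied to $xI-M$) and the matrix determinant lemma (which governs rank-one perturbations of determinants). To sidestep the question of whether $xI-A$ and $xI-B$ are invertible, I would treat $x$ as a formal variable and carry out the computations in the field of rational functions $\mathbb{R}(x)$; the resulting identity is in fact polynomial in $x$, so the claim for numerical $x$ follows by specialization.

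First I would apply the Schur complement with respect to the upper-left block of $xI-M$ to obtain
$$\chi_M(x) = \chi_A(x)\,\det\!\bigl(xI - B - bc^{\sf t}(xI-A)^{-1}ad^{\sf t}\bigr).$$
The middle factor $c^{\sf t}(xI-A)^{-1}a$ is the scalar $\alpha(x)$, so the correction inside the determinant collapses to the rank-one matrix $\alpha(x)\,bd^{\sf t}$. The matrix determinant lemma then yields
$$\chi_M(x) = \chi_A(x)\chi_B(x)\bigl(1 - \alpha(x)\beta(x)\bigr).$$

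Next I would evaluate $\chi_{\tilde A}(-x)$: writing $-xI - \tilde A = -(xI-A) - ac^{\sf t}$ and pulling $(-1)^m$ out of all rows gives $\chi_{\tilde A}(-x) = (-1)^m\det\bigl((xI-A)+ac^{\sf t}\bigr)$, and the matrix determinant lemma provides
$$\chi_{\tilde A}(-x) = (-1)^m \chi_A(x)\bigl(1 + \alpha(x)\bigr),$$
together with the analogous formula for $\chi_{\tilde B}(-x)$. Substituting these into the right-hand side of the claimed identity, the powers of $-1$ cancel in pairs, and the expression telescopes to
$$\chi_A(x)\chi_B(x)\bigl[(1+\alpha)+(1+\beta)-(1+\alpha)(1+\beta)\bigr] = \chi_A(x)\chi_B(x)(1-\alpha\beta),$$
which coincides with the formula for $\chi_M(x)$ above. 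I expect no conceptual obstacle beyond the bookkeeping of sign factors $(-1)^m,(-1)^n,(-1)^{m+n}$ and the mild technicality of working in $\mathbb{R}(x)$ rather than $\mathbb{R}$, which is what licenses the Schur complement and the matrix determinant lemma without extra invertibility hypotheses.
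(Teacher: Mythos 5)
Your proof is correct. The paper itself gives no proof of this lemma --- it is imported verbatim from Hwang and Park \cite{Hwang2011} --- so there is nothing in the source to compare against; your argument is a clean, self-contained derivation. The three ingredients all check out: the Schur complement of $xI-M$ with respect to the block $xI-A$ gives $\chi_M(x)=\chi_A(x)\det\bigl(xI-B-\alpha(x)\,bd^{\sf t}\bigr)=\chi_A(x)\chi_B(x)\bigl(1-\alpha(x)\beta(x)\bigr)$; the matrix determinant lemma gives $\chi_{\tilde A}(-x)=(-1)^m\chi_A(x)\bigl(1+\alpha(x)\bigr)$ and its analogue for $\tilde B$; and the sign factors $(-1)^m,(-1)^n,(-1)^{m+n}$ do cancel so that the right-hand side reduces to $\chi_A\chi_B\bigl[(1+\alpha)+(1+\beta)-(1+\alpha)(1+\beta)\bigr]=\chi_A\chi_B(1-\alpha\beta)$. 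Working over $\mathbb{R}(x)$ is exactly the right way to dispose of the invertibility issue, since $\det(xI-A)$ and $\det(xI-B)$ are nonzero elements of that field and both sides of the final identity are polynomials, so the numerical statement follows by specialization.
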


Therefore, in the following let $\overline{G}$ denote the \textit{generalized complement} of $G$, i.e. the graph with same vertex set as $G$, where two not necessarily distinct vertices are adjacent whenever they are non-adjacent in $G$. That is, the graph with adjacency matrix $A(\overline{G}) = J - A(G)$, where $J$ denotes the all-1 matrix. Now, we are able to prove the following:
\begin{theorem} \label{thm:charpoly}
	Let $R$ be a finite commutative ring and $n = \#U(R)$, i.e. the number of units in $R$. Then we have that $$\chi_{\eg(R)}(x) = x^{n-1} \big( (-1)^{n+1} \chi_{\overline{\Gamma(R)}}(-x) x + \chi_{\Gamma(R)}(x)(x^2-n) \big).$$
\end{theorem}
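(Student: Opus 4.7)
The plan is to expand $\chi_{\eg(R)}(x) = \det(xI - A(\eg(R)))$ by exploiting the block structure of $A(\eg(R))$ inherited from the coalescence construction, and then to eliminate the inverse quadratic form that appears using the matrix determinant lemma applied to the generalized complement $\overline{\Gamma(R)}$. Setting $m = \#V(\Gamma(R))$ and $A = A(\Gamma(R))$, the construction of $\eg(R)$ tells us that vertex $0$ is adjacent to every other vertex and carries a loop, the $n$ units are mutually non-adjacent and are adjacent only to vertex $0$, and the induced subgraph on the non-zero zero-divisors is $\Gamma(R)$. Ordering the vertices as $V(\Gamma(R))$, then $U(R)$, then $\{0\}$, this yields the block form
\begin{equation*}
A(\eg(R)) = \begin{pmatrix} A & 0 & \mathbf{1}_m \\ 0 & 0_{n \times n} & \mathbf{1}_n \\ \mathbf{1}_m^T & \mathbf{1}_n^T & 1 \end{pmatrix}.
\end{equation*}

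Since the $(m+n) \times (m+n)$ upper-left block of $xI - A(\eg(R))$ is block-diagonal with determinant $x^n \chi_{\Gamma(R)}(x)$, the Schur-complement formula applied to the $1 \times 1$ ``vertex $0$'' corner gives
\begin{equation*}
\chi_{\eg(R)}(x) = x^n \chi_{\Gamma(R)}(x) \Bigl((x-1) - \mathbf{1}_m^T (xI - A)^{-1} \mathbf{1}_m - \tfrac{n}{x}\Bigr),
\end{equation*}
as an identity in the fraction field that becomes a polynomial identity after clearing denominators. To express the quadratic form through $\chi_{\overline{\Gamma(R)}}(-x)$, I would use $A(\overline{\Gamma(R)}) = J - A$ together with the matrix determinant lemma $\det(B + \mathbf{1}\mathbf{1}^T) = \det(B)\bigl(1 + \mathbf{1}^T B^{-1}\mathbf{1}\bigr)$ applied to $B = xI - A$, obtaining
\begin{equation*}
\chi_{\overline{\Gamma(R)}}(-x) = (-1)^m \chi_{\Gamma(R)}(x)\bigl(1 + \mathbf{1}_m^T(xI - A)^{-1}\mathbf{1}_m\bigr).
\end{equation*}
Solving for $\mathbf{1}_m^T(xI-A)^{-1}\mathbf{1}_m$ and substituting back into the Schur identity, then collecting powers of $x$, produces the asserted formula.

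The main obstacle will be the sign bookkeeping: the factor $(-1)^m$ arising from the matrix determinant lemma combined with the substitution $x \mapsto -x$ must be reconciled with the claimed factor $(-1)^{n+1}$, which invokes the cardinality relation $m + n + 1 = \#R$. An alternative route avoiding the matrix inverse altogether is to apply Lemma~\ref{lemma:2.2} directly to the two-block splitting that isolates the $n \times n$ unit block from the $(m+1)$-vertex subgraph $\Gamma(R) \nabla \zgl(R)$ (here $\zgl(R)$ supplies the loop at $0$): the off-diagonal blocks then have the rank-one form $e\mathbf{1}_n^T$ and $\mathbf{1}_n e^T$ supported on vertex $0$, so $\tilde B = J_n$ and $\chi_{\tilde B}(-x)$ is known in closed form, and only the characteristic polynomials of $\Gamma(R) \nabla \zgl(R)$ and its ``dual'' $\tilde A_1$ remain to be manipulated in the same spirit.
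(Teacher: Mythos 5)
Your route is genuinely different from the paper's and, as a computation, it is sound. The paper never touches $(xI-A)^{-1}$: it splits $\eg(R) = \big(\Gamma(R)\nabla\zg(R)\big)\overset{\{0\}}{\bullet}\big(\ug(R)\nabla\zgl(R)\big)$, evaluates the two complete products with Lemma~\ref{lemma:2.2}, and glues them with the point-identification formula of Lemma~\ref{lemma:1}. You instead write down the full bordered adjacency matrix (your block form is correct, including the diagonal $1$ for the loop at $0$), take a Schur complement at the vertex $0$, and convert the quadratic form $\mathbf{1}_m^{\sf t}(xI-A)^{-1}\mathbf{1}_m$ into $\chi_{\overline{\Gamma(R)}}(-x)$ via the matrix determinant lemma. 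These are two faces of the same rank-one bordering trick (Lemma~\ref{lemma:2.2} is itself proved this way), but your version is more self-contained and shows exactly where the generalized complement enters; both of your displayed identities are correct.

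The one step you must not carry out is the final ``reconciliation'' of signs, because it is impossible in general. Substituting your determinant-lemma identity into your Schur identity gives
\begin{equation*}
\chi_{\eg(R)}(x) = x^{n-1}\big((-1)^{m+1}\chi_{\overline{\Gamma(R)}}(-x)\,x + \chi_{\Gamma(R)}(x)(x^2-n)\big), \qquad m = \#V(\Gamma(R)),
\end{equation*}
and $(-1)^{m+1}=(-1)^{n+1}$ only when $m\equiv n \pmod 2$, i.e.\ only when $\#R=m+n+1$ is odd. When $\#R$ is even the printed formula fails: for $R=\mathbb Z_4$ one has $n=2$, $\chi_{\Gamma(R)}(x)=x-1$, $\chi_{\overline{\Gamma(R)}}(-x)=-x$, and a direct computation gives $\chi_{\eg(R)}(x)=x^4-2x^3-2x^2+2x$ (consistent with $\operatorname{tr}A(\eg(R))=2$ from the loops at $0$ and $2$), whereas the theorem as stated yields $x^4-2x^2+2x$. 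Your exponent is the right one: in the paper's own proof the second application of Lemma~\ref{lemma:2.2} has $B=A(\Gamma(R))$, so the lemma's $(-1)^{n}$ there is $(-1)^{\#V(\Gamma(R))}$, and writing it as $(-1)^{n}$ with $n=\#U(R)$ is a notational slip that propagates into the final statement. So finish your argument exactly as you set it up and state the conclusion with $(-1)^{\#V(\Gamma(R))+1}$; do not try to force it into the form with $(-1)^{n+1}$.
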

\begin{proof}
	We recall that $$\eg(R) = \big (\Gamma(R) \nabla \zg(R) \big) \overset{\{0\}}{\bullet} \big(\ug(R) \nabla \zgl(R) \big).$$ We first determine the characteristic polynomial of $\ug(R) \nabla \zgl(R)$ by applying Lemma~\ref{lemma:2.2} for $A= \begin{pmatrix}
	1
	\end{pmatrix}$ and $B$ being the zero-matrix of dimension $n \times n$. We can easily see that $\chi_A(x) = x-1$, $\chi_{\tilde A}(x) = x$, $\chi_B(x) = x^n$ and $\chi_{\tilde B}(x) = x^{n-1}(x-n)$. Thus, we get \begin{equation*}
		\begin{split}
			\chi_{\ug(R) \nabla \zgl(R)}(x) &= \chi_M(x) \\ &= (-1)(-x)x^n+(-1)^n(x-1)(-x)^{n-1}(-x-n) -\\ & \quad \quad - (-1)^{n+1}(-x)(-x)^{n-1}(-x-n) \\ &= x^{n-1}(x^2-x-n).
		\end{split}
	\end{equation*}
Analogously, we find the characteristic polynomial of $\Gamma(R) \nabla \zg(R)$ for $A = \begin{pmatrix}
0
\end{pmatrix}$ and $B = A(\Gamma(R))$ to be \begin{equation*}
	\begin{split}
		\chi_{\Gamma(R) \nabla \zg(R)}(x) = (-1)^{n+1} \chi_{\overline{\Gamma(R)}}(-x) + \chi_{\Gamma(R)}(x)(x+1).
	\end{split}
\end{equation*}

Finally, with Lemma~\ref{lemma:1} we get \begin{equation*}
	\begin{split}
		\chi_{\eg(R)}(x) &= \chi_{\ug(R) \nabla \zgl(R)}(x) \chi_{\Gamma(R)} + x^n \chi_{\Gamma(R) \nabla \zg(R)}(x) - x \cdot x^n\chi_{\Gamma(R)} \\ &=x^{n-1}(x^2-x-n)\chi_{\Gamma(R)}(x) + \\ & \quad \quad + x^n \big((-1)^{n+1} \chi_{\overline{\Gamma(R)}}(-x) + \chi_{\Gamma(R)}(x)(x+1) \big) - \\ & \quad \quad \quad \quad - x^{n+1}\chi_{\Gamma(R)}(x) \\ &= x^{n-1} \big( (-1)^{n+1} \chi_{\overline{\Gamma(R)}}(-x) x + \chi_{\Gamma(R)}(x)(x^2-n) \big).
	\end{split}
\end{equation*}
\end{proof}

\begin{remark}
	If $R \cong R_1 \times \ldots \times R_r$, we can apply Theorem~\ref{thm:charpoly} to each of the rings $R_i$, which gives us the characteristic polynomials $\chi_{\eg(R_i)}$. By computing the roots of $\chi_{\eg(R_i)}$, we find the eigenvalues of $\eg(R)$ to be all possible products of these roots, since $\eg(R) \cong \eg(R_1) \times \ldots \times \eg(R_r)$. Unfortunately, it is difficult to extrapolate the eigenvalues of $\Gamma(R)$ from the ones of $\eg(R)$, since the characteristic polynomial $\chi_{\Gamma(R)}$ not only depends on $\chi_{\eg(R)}$, but also on the characteristic polynomial of the generalized complement of $\Gamma(R)$.
\end{remark}

 \newcommand{\noop}[1]{}

\end{document}